\documentclass[preprint,11pt]{elsarticle}
\usepackage{lineno,hyperref}
\modulolinenumbers[5]

\usepackage{color}
\usepackage{amsmath}
\usepackage{amsfonts,amsthm,amssymb}
\usepackage{amsfonts}
\usepackage{graphics}
\usepackage{pstricks}
\usepackage{graphicx}
\usepackage{cleveref}
\usepackage{extarrows,chngpage,array,float}
\usepackage{amssymb}
\usepackage{latexsym,bm}
\newtheorem{theorem}{Theorem}[section]
\newtheorem{lemma}[theorem]{Lemma}

\newtheorem{conjecture}[theorem]{Conjecture}

\allowdisplaybreaks   

\numberwithin{equation}{section}

\makeatletter
\def\ps@pprintTitle{}
\makeatother

\begin{document}

\makeatletter
\def\ps@pprintTitle{}
\makeatother

\begin{frontmatter}

\title{Proof of a Brown-Mol conjecture on subtree roots}

\author{Xian'an Jin$^{a,b}$, \ \ Tianlong Ma$^{c,}$\footnote{Corresponding author.}, \ \ Yi Wang$^{d}$
	\\[1ex]
\small $^a$School of Mathematics and Statistics\\[-0.8ex]
\small Qinghai Minzu University\\[-0.8ex]
\small P. R. China\\
\small $^b$School of Mathematical Sciences\\[-0.8ex]
\small Xiamen University\\[-0.8ex]
\small P. R. China\\
\small $^c$School of Science\\[-0.8ex]
\small Jimei University\\[-0.8ex]
\small P. R. China\\
\small $^d$School of Mathematical Sciences\\[-0.8ex]
\small  Anhui University\\[-0.8ex]
\small P. R. China\\
\small\tt Email: xajin@xmu.edu.cn, tianlongma@aliyun.com, wangy@ahu.edu.cn}

\begin{abstract}
The subtree polynomial of a tree is the generating function that
enumerates its subtrees according to their orders. Brown and Mol
conjectured that every subtree root of a tree of order $n\ge 2$
lies in the disk
\[
\left\{z\in\mathbb C:
|z|\le 1+\sqrt[n-1]{n-1}
\right\}.
\]
We prove this conjecture by introducing a recursive comparison method
based on an extremal problem over integer compositions. We further
characterize the equality case: the upper bound is attained if and
only if $n$ is even and the tree is the star; in this
case the unique boundary root is $-1-\sqrt[n-1]{n-1}$.
We also show that every nonzero subtree root $z$ satisfies
\[
|z|>\sqrt[n-1]{n-1}-1.
\]
The lower bound is asymptotically sharp as $n\to\infty$. For odd
$n$, although the upper bound is not attained, it is asymptotically
sharp as $n\to\infty$.

\end{abstract}

\begin{keyword}
subtree polynomial; subtree root
\MSC[2020] 05C31
\end{keyword}

\end{frontmatter}

\section{Introduction}

Let $T$ be a tree of order $n$, and let $s_k(T)$ denote the number of
subtrees of $T$ with $k$ vertices.
The \emph{subtree polynomial} of $T$, introduced by Jamison in \cite{1983On}, 
is
\[
\Phi_T(z)=\sum_{k=1}^{n}s_k(T)z^k.
\]

Subtree polynomials and related subtree statistics have been studied
extensively. Jamison~\cite{1983On,1984Monotonicity} initiated the study
of the mean order of subtrees and established fundamental extremal and
monotonicity results. Vince and Wang~\cite{Vince} obtained sharp bounds
on the average order of subtrees in series-reduced trees, and
Haslegrave~\cite{Haslegrave} further investigated the corresponding
extremal average subtree densities. More recently, Cambie, Wagner, and
Wang~\cite{CambieWagnerWang} studied the maximum mean subtree order.
Luo, Xu, Wagner, and Wang~\cite{LuoXuWagnerWang} proved Jamison's
edge-contraction conjecture for pendant edges, and Wang~\cite{WangContraction}
subsequently settled the conjecture in full. For further results on
subtree enumeration, local and global mean subtree orders, subtree-order
distributions, and related extremal problems, we refer the reader to
\cite{CambieChenHaoTokar,CameronMol,ChenWeiLian,Ralaivaosaona,Sills,VinceConnected,Wagner,Yan}.

The zeros of $\Phi_T(z)$  are
called \emph{subtree roots}.
In \cite{Brown}, Brown and Mol established a universal upper bound on the moduli
of subtree roots.

\begin{theorem}\emph{\cite{Brown}}\label{K1,3}
	The subtree roots of every tree $T$ lie in the disk
	\[\{z\in \mathbb{C}: |z|\leq 1 + \sqrt[3]{3}\},\]
	and the only tree with a subtree root on the boundary of this disk is $K_{1,3}$.
\end{theorem}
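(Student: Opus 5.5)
The plan is to deduce the statement from a sharper, order-dependent bound: every subtree root of a tree $T$ of order $n\ge 2$ satisfies $|z|\le 1+\sqrt[n-1]{n-1}$. The claim then follows by maximizing over $n$. The function $m\mapsto m^{1/m}$ on positive integers is maximized at $m=3$, since $3^{1/3}>2^{1/2}=4^{1/4}$ and the function decreases for $m\ge 3$. Hence $\sqrt[n-1]{n-1}\le \sqrt[3]{3}$, with equality only for $n=4$.

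The star makes the bound plausible and identifies the extremal tree. For $K_{1,n-1}$ we have $\Phi(z)=z(1+z)^{n-1}+(n-1)z$. Its nonzero roots satisfy $(1+z)^{n-1}=-(n-1)$, so $|1+z|=\sqrt[n-1]{n-1}$ and $|z|\le 1+\sqrt[n-1]{n-1}$. Equality forces $1+z$ to be a negative real, which requires $n-1$ to be odd. For $n=4$ this gives the boundary root $-1-\sqrt[3]{3}$ of $K_{1,3}$.

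For a general tree I would root $T$ at a vertex $v$ with branches $T_1,\dots,T_k$ of orders $n_1+\dots+n_k=n-1$. Let $R_{T_i}$ count the subtrees of $T_i$ that contain its root. Then
\[
\Phi_T(z)=z\prod_{i=1}^k\bigl(1+R_{T_i}(z)\bigr)+\sum_{i=1}^k\Phi_{T_i}(z).
\]
Suppose $|z|=r>1+\sqrt[n-1]{n-1}$. I would show the first term strictly dominates the second in modulus, so that $\Phi_T(z)\neq 0$. This needs two recursive estimates, proved by induction on order:
\begin{itemize}
\item a lower bound $|1+R_{T_i}(z)|\ge$ (something like $(r-1)^{n_i}$), obtained from the same product recursion applied inside $T_i$;
\item an upper bound $|\Phi_{T_i}(z)|\le \Phi_{T_i}(r)$, bounded in turn in terms of $r^{n_i}$ and $n_i$.
\end{itemize}
Combining them reduces nonvanishing to a numerical inequality in $r$ and the composition $(n_1,\dots,n_k)$ of $n-1$. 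This becomes an extremal problem over compositions: show that the worst case is $n_i=1$ for all $i$, which is exactly the star. Equality analysis then traces when every inequality is tight, which forces the star, $n$ even, and $z$ real negative.

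The main obstacle is the lower bound on $|1+R_{T_i}(z)|$. A naive triangle inequality loses too much, because $R$ has many terms of comparable size near $|z|\approx 2.4$. I would first try proving jointly by induction that $|R_S(z)|\ge |z|\,(|z|-1)^{|S|-1}$, together with $|R_S(z)|$ exceeding the sum of the moduli of the non-root subtree terms. If that fails, I would instead compare $T$ with the tree obtained by contracting a pendant path toward a star, and show monotonicity of the extremal quantity under that operation. The composition optimization itself should be a convexity argument, and I expect it to be routine once the recursive inequality is in the right form.
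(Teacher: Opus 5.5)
The reduction via $\max_m m^{1/m}=3^{1/3}$ and the star computation are fine. The step for a general tree, however, has a genuine gap: your two estimates are decoupled and cannot be combined.

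\textbf{Why the two estimates fail.} Your lower bound for $|1+R_{T_i}(z)|$ is at best of order $r(r-1)^{n_i-1}$. That is the value a star rooted at its centre actually takes at $z=-r$. By contrast, $|\Phi_{T_i}(z)|\le\Phi_{T_i}(r)$ is sharp only at $z=r$. Near the critical radius $(r-1)^{n-1}\approx n-1$, so the first term can be of order $n$ while $\Phi_{T_i}(r)\ge r^{n_i}$ is exponential.

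Concretely, join the centres of two copies of $K_{1,k-1}$ to a new vertex $v$, so $n=2k+1$.
\begin{itemize}
\item For $r$ near $1+\sqrt[n-1]{n-1}$ and $z=-r$, one has $|\Phi_{T,v}(z)|=r\bigl(r(r-1)^{k-1}\pm1\bigr)^2=O(n)$.
\item Likewise $|\Phi_{T,w}(-r)|=O(n)$ for every other choice of root $w$.
\item In every case the corresponding $\sum_i\Phi_{T_i}(r)$ is at least $r^{k}$. For $k=3$ and root $v$ this is about $25$ against $62$.
\end{itemize}
No lower bound can exceed the actual value at $z=-r$. Hence no choice of root and no sharpening of the lower bound rescues your inequality. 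Any reduction to a numerical inequality in $r$ and the composition that bounds the second term by $\Phi_{T_i}(r)$ must fail.

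Your fallback, a joint induction in which the root part dominates the non-root part for every subtree, fails for a different reason. Take $|z|=r$ near $1+\sqrt[n-1]{n-1}$ with $n\ge5$. The proper subtree $S=K_{1,n-2}$, rooted at its centre, satisfies $|\Phi_{S-v}(-r)|/|\Phi_{S,v}(-r)|=(n-2)/(r-1)^{n-2}>1$. This is exactly the ``wrong direction'' effect noted in the paper's introduction.

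\textbf{The missing idea.} What is needed is a same-point comparison whose constant may exceed $1$ on subtrees. The paper proves $|\Phi_{T-u}(z)|\le B_n(r)\,|\Phi_{T,u}(z)|$ (Lemma~\ref{lem:comparison}), with $B_n$ defined recursively over $\mathcal C(n-1)$. Then $|\Phi_{T-u}(z)|\le\sum_i(1+B_{n_i}(r))|L_i|$ and $|\Phi_{T,u}(z)|\ge r\prod_i(|L_i|-1)$ involve the same quantities $|L_i|$. Since $\sum_ic_ix_i/\prod_i(x_i-1)$ decreases in each $x_i$ (Lemma~\ref{lem:H-coordinatewise}), the star bound $\lambda_{n_i}(r)$ can be inserted into numerator and denominator simultaneously.

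The composition problem at $r=\gamma_n$ is then solved not by convexity but through $(1+a_n)a_n^{k-1}-1=a_n^kq_k$ and $a_n^k\le k$. This gives $B_m(\gamma_n)=(m-1)/a_n^{m-1}$ (Lemmas~\ref{lem:partition} and~\ref{lem:criticalB}). Note two facts about this value:
\begin{itemize}
\item $B_m(\gamma_n)>1$ for $3\le m\le n-1$, apart from $(n,m)=(5,3)$.
\item For $m=n$ the maximum is attained by $(n-1)$ as well as by $(1,\dots,1)$, so the star composition is not the unique worst case.
\end{itemize}

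\textbf{Relation to the paper.} The paper does not prove this statement. It quotes it from Brown and Mol and uses it as the $n=4$ case of Theorem~\ref{thm:main}. Deriving it from the order-dependent bound is therefore legitimate only if $n=4$ is checked by hand. That check is easy: $K_{1,3}$ is explicit, and the nonzero roots of $P_4$ are those of $z^3+2z^2+3z+4$, all of modulus below $2$. For the boundary clause no general equality analysis is needed, since $\sqrt[n-1]{n-1}<\sqrt[3]{3}$ for $n\ne4$.
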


Brown and Mol further proposed an order-dependent refinement of Theorem \ref{K1,3}.

\begin{conjecture}\emph{\cite{Brown}}\label{C1}
	Let $T$ be a tree of order $n\geq 2$. Then the subtree roots of $T$ lie in the disk
	\[\{z\in \mathbb{C}: |z|\leq 1 + \sqrt[n-1]{n-1}\}.\]
\end{conjecture}

For $n\ge4$, the radius in Conjecture~1.2 is strictly decreasing
with $n$, so the
inductive argument used for the universal disk cannot be applied directly:
passing to a proper subtree changes the target radius in the wrong direction.
To overcome this difficulty, we introduce a recursively defined quantity that
controls, at a fixed modulus, the ratio between the subtree polynomial of a
vertex-deleted forest and the local subtree polynomial at the deleted vertex.
A sharp estimate for this ratio at the conjectured radius then yields the
desired zero-free region. 

Our main result resolves Conjecture \ref{C1} and characterizes the equality case.

\begin{theorem}\label{thm:main}
	Let $T$ be a tree of order $n\geq 2$. Then the subtree roots of $T$ lie in the disk
	\[\{z\in \mathbb{C}: |z|\leq 1 + \sqrt[n-1]{n-1}\}.\] 
    Moreover, $T$ has a subtree root on the boundary $|z|=1+(n-1)^{1/(n-1)}$ 
if and only if $n$ is even and  $T\cong K_{1,n-1}$, and  
in this case the unique boundary root is $z=-1-(n-1)^{1/(n-1)}$.

\end{theorem}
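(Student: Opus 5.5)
Fix $R=1+(n-1)^{1/(n-1)}$ and $z$ with $|z|\ge R$. The plan is to root $T$ at a vertex $v$ and use the standard recursion for subtree polynomials. For a rooted tree, let $\phi_v(T;z)$ be the generating function of subtrees containing the root $v$. Then
\[
\phi_v(T;z)=z\prod_{u}(1+\phi_u(T_u;z)),
\]
where the product runs over the children $u$ of $v$ and $T_u$ is the branch at $u$. We also have $\Phi_T=\phi_v(T)+\Phi_{T-v}$, and $\Phi_{T-v}=\sum_u \Phi_{T_u}$. A root of $\Phi_T$ therefore forces
\[
\frac{|\Phi_{T-v}(z)|}{|\phi_v(T;z)|}=1 .
\]
Following the introduction, I would define recursively a quantity $\rho(T,v;r)$ that upper-bounds this ratio at modulus $r$. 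The bound should use only $|1+\phi_u|\ge |\phi_u|-1$ and the triangle inequality on $\sum\Phi_{T_u}$. It should then be expressed through an extremal problem over the composition $n-1=n_1+\dots+n_d$ of branch sizes.

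The key steps, in order, are as follows.

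\textbf{Step 1: a lower bound on the rooted polynomial.} Prove inductively that for $|z|=r\ge R$ and a branch of order $m$,
\[
|\phi_u(T_u;z)|\ge r(r-1)^{m-1}.
\]
This comes from $|1+\phi|\ge|\phi|-1\ge r-1$ at each level, iterated. It should be refined with an inductive invariant carrying both $|\phi_u|$ and $|\Phi_{T_u}|/|\phi_u|$ simultaneously, so the estimate does not degrade with depth.

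\textbf{Step 2: the extremal composition problem.} Reduce to maximizing
\[
\frac{\sum_i |\Phi_{T_i}|}{r\prod_i(|\phi_i|-1)}
\]
over compositions of $n-1$. Show that the maximum is attained by the all-ones composition, i.e.\ the star. There each leaf child contributes $|\Phi|=|\phi|=r$, giving the ratio
\[
\frac{(n-1)r}{r(r-1)^{n-1}}=\frac{n-1}{(r-1)^{n-1}},
\]
which is at most $1$ exactly when $r\ge R$. The comparison would use convexity-type merging: replacing a part $n_i\ge 2$ by smaller parts does not decrease the ratio bound. This recursive comparison is where the changing target radius is circumvented, since the radius $R$ is fixed throughout and sub-branches are only evaluated at the same $r$.

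\textbf{Step 3: the root choice and strictness.} Choose $v$ to be a vertex such as a centroid, or simply any vertex, depending on what the comparison needs. Conclude that $|\Phi_{T-v}|\le|\phi_v|$, with equality only if every inequality is tight. Tightness in $|1+\phi_u|=|\phi_u|-1$ for leaves requires $z$ to be a negative real number. Tightness in Step 2 requires all branches to be leaves, so $T=K_{1,n-1}$ rooted at the center.

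\textbf{Step 4: the equality case.} For the star, compute directly:
\[
\Phi=z(1+z)^{n-1}+(n-1)z .
\]
Its roots satisfy $(1+z)^{n-1}=-(n-1)$, which has a root of modulus $R$ only if $1+z=-(n-1)^{1/(n-1)}$ with $(-1)^{n-1}=-1$, i.e.\ $n$ even. That root is $z=-R$. For other candidate points with $|1+z|=(n-1)^{1/(n-1)}$ we have $|z|<R$ unless $z$ is real negative, which gives uniqueness.

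The main obstacle is Step 2, the merging inequality. The naive bound in Step 1 loses too much for long paths: for large $m$, $|\Phi_{T_u}|$ can be comparable to $m|\phi_u|$. So the invariant must be sharp enough that a branch of size $m$ is never worse than $m$ leaves. I would first try the invariant
\[
|\Phi_{T_u}|\le c_m\,(|\phi_u|-1)
\]
with explicit constants. I would then verify the inequality
\[
\sum_i c_{n_i}\prod_{j\ne i}(\,\cdot\,)\le \cdots
\]
by induction on the number of parts, checking the two-part case by hand.
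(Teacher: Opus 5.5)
Your overall architecture is the paper's: root at a vertex $u$, bound $|\Phi_{T-u}(z)|/|\Phi_{T,u}(z)|$ by a worst-case quantity defined recursively over compositions of $n-1$, and use $|1+L_i|\ge|L_i|-1$, the Brown--Mol bound $|L_i|\ge r(r-1)^{n_i-1}$, and monotonicity in $|L_i|$. There is a genuine gap, though. Step~2 carries the whole proof, and it is both unproved and misstated.

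\textbf{Step 2 is false as stated.} You claim that for each fixed $r\ge R$ the all-ones composition maximizes the bound. This fails for $r>R$. In the paper's notation, already $B_3(r)=r/(r^2-r-1)>2/(r-1)^2$ for $r>1+\sqrt2$. In general, a composition with largest part $k\ge2$ gives a bound of order $r^{k-n}$, while the star gives $(n-1)/(r-1)^{n-1}$. Rooting at a centroid does not rescue this: for $n=5$ and large $r$, the composition $(2,2)$ beats $(1,1,1,1)$.

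\textbf{What the paper does instead.} It evaluates the recursion only at $r=\gamma_n=1+a_n$, with $a_n=(n-1)^{1/(n-1)}$. It then proves separately that $B_n(r)$ is strictly decreasing in $r$. That proof uses that $\lambda_k(r)$ is increasing, that the $B_{n_i}$ are decreasing by induction, and that $H_{\boldsymbol{c}}$ is decreasing in the $x_i$ and increasing in the $c_i$.

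\textbf{The missing computation at $r=\gamma_n$.}
\begin{itemize}
\item Inductively, $B_m(\gamma_n)=(m-1)/a_n^{m-1}$ for every $m\le n$.
\item One has $\lambda_k(\gamma_n)-1=a_n^kq_k$ with $q_k=1+a_n^{-1}-a_n^{-k}\ge1$. This reduces the composition bound to $\sum_i(a_n^{m_i-1}+m_i-1)\le m\prod_i q_{m_i}$.
\item The per-part inequality $a_n^{k-1}+k-1\le kq_k$ is equivalent to $(k-a_n^k)(a_n^{k-1}-1)\ge 0$, i.e.\ to $a_n^k\le k$ for $2\le k\le n-1$.
\end{itemize}
That last inequality is exactly where the specific radius enters. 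Your placeholder ``$\sum_i c_{n_i}\prod_{j\ne i}(\cdot)\le\cdots$, two-part case by hand'' does not supply it. Your invariant $|\Phi_{T_u}|\le c_m(|\phi_u|-1)$ is workable if you take $c_m=(1+B_m)\lambda_m/(\lambda_m-1)$; this reproduces exactly the paper's bound.

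\textbf{Small $n$.} The inequality $a_n^2\le 2$ needs $n\ge5$, and the recursive worst-case bound really does exceed $1$ at $n=4$. With $a=3^{1/3}$, the composition $(3)$ (root at an end of $P_4$) gives
\[
3(a+2)/\bigl((a^2+a-1)(a^2+2)\bigr)\approx 1.0034>1.
\]
So small orders must be handled separately. The paper treats $n\le 3$ directly and invokes the Brown--Mol theorem for $n=4$.

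\textbf{Equality case.} Your equality analysis is wrong as stated. At $r=\gamma_n$ the maximum $B_n(\gamma_n)=1$ is attained both by $(1,\dots,1)$ and by $(n-1)$. So if $v$ is a leaf, tightness does not force all branches to be leaves. You must either root at a non-leaf vertex, so that $d\ge 2$ excludes $(n-1)$, or, as the paper does, run the argument at every vertex and conclude that every degree is $1$ or $n-1$. Your Step~4 computation for the star is correct and matches the paper.
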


The remainder of the paper is organized as follows. In Section~2, we develop the
local decomposition formulas and the recursive comparison estimate needed for
the proof of Theorem~\ref{thm:main}. 
In Section~3, we  prove Theorem~\ref{thm:main}. In Section~4, we  provide a lower bound on the modulus of nonzero subtree roots and discuss the sharpness of our upper and lower bounds. In the final section, we recall a stronger conjecture of Brown and
Mol that implies Conjecture \ref{C1}.

\section{Preliminaries}

We first list several related definitions and observations on the subtree polynomial which have appeared in the literature. In \cite{Brown}, the definition of subtree polynomial was extended to forests, as forests were regarded as vertex-deleted subgraphs of trees. If $F$ is a forest with components $T_1,\ldots,T_k$, then the subtree polynomial of $F$ is given by
\[\varPhi_{F} (z)=\sum_{i=1}^{k}\varPhi_{T_i}(z).\]
Let $T$ be a tree of order $n$, and let $v\in V(T)$.
For each $k\in\{1,\ldots,n\}$, let $s_k(T,v)$ denote the number of subtrees of $T$ of order $k$ containing $v$. Then the \emph{local subtree polynomial} of $T$ at $v$, introduced in \cite{1983On}, is  defined by
\[\varPhi_{T,v} (z)=\sum_{k=1}^{n}s_k(T,v)z^k.\]
For any vertex $u$ of a tree $T$, every subtree of $T$ either contains $u$ or is contained in $T-u$.
Consequently,
\begin{equation}
	\label{eq:decomposition}
	\Phi_T(z)=\Phi_{T,u}(z)+\Phi_{T-u}(z).
\end{equation}
Let $T_1,\dots,T_d$ be the components of $T-u$, where $d=d_T(u)$, the degree of the vertex $u$ in $T$, and let $u_i$ be the
unique neighbor of $u$ in $T_i$. A subtree containing $u$ is obtained by
choosing, independently for each $i$, either no vertex of $T_i$ or a
subtree of $T_i$ containing $u_i$. Hence
\begin{equation}
	\label{eq:rooted-product}
	\Phi_{T,u}(z)
	=
	z\prod_{i=1}^{d}\bigl(1+\Phi_{T_i,u_i}(z)\bigr).
\end{equation}

We shall use the following lower bound given by Brown and Mol in \cite{Brown}.

\begin{lemma}\emph{\cite{Brown}}\label{lowerbound}
	Let $T$ be a tree of order $n$, and let $z\in \mathbb{C}$ with $|z|\geq 2$. Then for every vertex $v$ of $T$,
	$$|\Phi_{T,v}(z)|\geq |z|(|z|-1)^{n-1}.$$
\end{lemma}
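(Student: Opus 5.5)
We prove Lemma~\ref{lowerbound} by strong induction on $n$, using the product formula \eqref{eq:rooted-product} at the vertex $v$. Write $r=|z|\ge 2$. For the base case $n=1$ we have $\Phi_{T,v}(z)=z$, so $|\Phi_{T,v}(z)|=r=r(r-1)^{0}$, as required.

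For the inductive step, let $T$ have order $n\ge 2$. Let $T_1,\dots,T_d$ be the components of $T-v$, of orders $n_1,\dots,n_d\ge 1$ with $\sum_i n_i=n-1$, and let $u_i$ be the neighbour of $v$ in $T_i$. By \eqref{eq:rooted-product},
\[
|\Phi_{T,v}(z)|=r\prod_{i=1}^{d}\bigl|1+\Phi_{T_i,u_i}(z)\bigr| .
\]
So it suffices to show that each factor satisfies $|1+\Phi_{T_i,u_i}(z)|\ge (r-1)^{n_i}$; multiplying these bounds gives $r(r-1)^{n-1}$.

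To bound a single factor, apply the induction hypothesis to $(T_i,u_i)$, which gives $|\Phi_{T_i,u_i}(z)|\ge r(r-1)^{n_i-1}$. The reverse triangle inequality then yields
\[
|1+\Phi_{T_i,u_i}(z)|\ge r(r-1)^{n_i-1}-1 .
\]
We need this to be at least $(r-1)^{n_i}$. The difference is
\[
r(r-1)^{n_i-1}-1-(r-1)^{n_i}=(r-1)^{n_i-1}\bigl(r-(r-1)\bigr)-1=(r-1)^{n_i-1}-1 .
\]
Since $r\ge 2$ we have $r-1\ge 1$, so $(r-1)^{n_i-1}\ge 1$ and the difference is nonnegative. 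This is the only place the hypothesis $|z|\ge 2$ enters, and it is exactly what is needed: for $r<2$ the bound fails already when $n_i$ is large.

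The argument has no real obstacle. The one point requiring care is to run the induction on the rooted quantity $|1+\Phi_{T_i,u_i}|$ rather than on $\Phi_T$ itself, so that the reverse triangle inequality loses only the additive $1$. That loss is absorbed by the slack $(r-1)^{n_i-1}-1\ge 0$ computed above.
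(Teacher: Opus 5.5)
Your proof is correct. The paper imports this lemma from Brown and Mol without proof, and your strong induction is the natural self-contained argument: you apply the rooted product formula \eqref{eq:rooted-product} and the reverse triangle inequality factor by factor, and the slack $(r-1)^{n_i-1}-1\ge 0$ (valid for $r\ge 2$) absorbs the lost $1$. This is the same mechanism the paper reuses in Lemma~\ref{lem:comparison}.
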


In order to estimate $|\Phi_{T-u}(z)|/|\Phi_{T,u}(z)|$ in the next section, we introduce several auxiliary functions and establish the required
monotonicity properties.

Let $c_1,\dots,c_d>0$. For $x_1, \ldots, x_d >1$,  define
\[
H_{\boldsymbol{c}}(x_1,\dots,x_d)
=
\frac{\sum_{i=1}^{d}c_ix_i}
{\prod_{i=1}^{d}(x_i-1)}.
\]

\begin{lemma}
	\label{lem:H-coordinatewise}
	If $1<x_i\le y_i$ for every $i=1,\dots,d$, then
	\[
	H_{\boldsymbol{c}}(y_1,\dots,y_d)
	\le
	H_{\boldsymbol{c}}(x_1,\dots,x_d).
	\]
	Moreover, the inequality is strict if there exists an index $j$ such that $x_j<y_j$.
\end{lemma}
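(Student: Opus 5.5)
It suffices to show that $H_{\boldsymbol{c}}$ is strictly decreasing in each single coordinate when the other coordinates are held fixed. The general statement then follows by moving from $(x_1,\dots,x_d)$ to $(y_1,\dots,y_d)$ one coordinate at a time. Set $z^{(0)}=(x_1,\dots,x_d)$, and for $k=1,\dots,d$ let $z^{(k)}$ be obtained from $z^{(k-1)}$ by replacing the $k$-th entry $x_k$ with $y_k$. Every intermediate point has all coordinates greater than $1$, so $H_{\boldsymbol{c}}$ is defined along the whole chain. Each step either leaves the value unchanged (when $x_k=y_k$) or strictly decreases it. Chaining these gives $H_{\boldsymbol{c}}(y)\le H_{\boldsymbol{c}}(x)$, and the inequality is strict as soon as some $x_j<y_j$, since that step is strict.

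For the single-coordinate claim, fix $j$ and the values $x_i>1$ for $i\ne j$. Put $A=\sum_{i\ne j}c_ix_i\ge 0$ and $P=\prod_{i\ne j}(x_i-1)>0$, with the conventions $A=0$ and $P=1$ when $d=1$. As a function of $t=x_j\in(1,\infty)$ we then have
\[
g(t)=\frac{A+c_jt}{P\,(t-1)}.
\]
I would show that $g$ is strictly decreasing. One way is the derivative, whose numerator is $c_j(t-1)-(A+c_jt)=-(A+c_j)<0$. Alternatively, write
\[
g(t)=\frac{1}{P}\Bigl(c_j+\frac{A+c_j}{t-1}\Bigr)
\]
and note that $A+c_j>0$ while $1/(t-1)$ is strictly decreasing on $(1,\infty)$. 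The second form avoids calculus entirely, and I would present it.

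No step here is a real obstacle. The only thing to check is positivity: we need $A+c_j>0$, which holds because every $c_i>0$ and every $x_i>1$, and we need $P>0$, which holds because each factor $x_i-1$ is positive. This positivity is also exactly what makes the monotonicity strict.
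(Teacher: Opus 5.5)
Your proof is correct and matches the paper's: it establishes strict monotonicity in each coordinate and then replaces the coordinates one at a time, which is exactly the paper's chain $\boldsymbol{w}_0,\dots,\boldsymbol{w}_d$. The paper obtains the coordinatewise monotonicity from the partial derivative $-\frac{c_j+\sum_{i\ne j}c_ix_i}{(x_j-1)\prod_{i}(x_i-1)}$, and your rewriting $g(t)=\frac{1}{P}\bigl(c_j+\frac{A+c_j}{t-1}\bigr)$ is simply a calculus-free version of that same step.
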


\begin{proof}
	For each $j$, since \[
	\frac{\partial H_{\boldsymbol{c}}}{\partial x_j}
	=
	-\frac{c_j+\sum_{i\ne j}c_i x_i}
	{(x_j-1)\prod_{i=1}^d (x_i-1)}
	<0,
	\] 
	it follows that $H_{\boldsymbol{c}}(x_1,\dots,x_d)$ 
	is strictly decreasing in each variable $x_j$ on $(1,+\infty)$.
	
	Set
	\[
	\boldsymbol{w}_{0}=(x_1,\dots,x_d),
	\]
	and, for $j=1,\dots,d$, define
	\[
	\boldsymbol{w}_{j}
	=
	(y_1,\dots,y_j,x_{j+1},\dots,x_d).
	\]
	Clearly, 
	$$\boldsymbol{w}_{d}
	=
	(y_1,y_2,\dots,y_d). $$ For each $j$, the vectors $\boldsymbol{w}_{j-1}$ and
	$\boldsymbol{w}_{j}$ differ only in their $j$-th coordinate, and $x_j\leq y_j$.	
	Since $H_{\boldsymbol{c}}(x_1,\dots,x_d)$ 
	is strictly decreasing in each variable $x_j$ on $(1,\infty)$, we have
	\[
	H_{\boldsymbol{c}}\bigl(\boldsymbol{w}_{j}\bigr)
	\le
	H_{\boldsymbol{c}}\bigl(\boldsymbol{w}_{j-1}\bigr)
	\]
	with strict inequality whenever $x_j<y_j$. Thus 
	\[H_{\boldsymbol{c}}\bigl(\boldsymbol{w}_{d}\bigr)\le H_{\boldsymbol{c}}\bigl(\boldsymbol{w}_{d-1}\bigr)\le\ldots \le H_{\boldsymbol{c}}\bigl(\boldsymbol{w}_{0}\bigr)\]
	This completes the proof.
\end{proof}

For $n\ge 1$ and $r\ge 2$, define
\[
\lambda_n(r)=r(r-1)^{n-1}.
\]
For a positive integer $n$, let $\mathcal{C}(n)$ denote the set of all
compositions of $n$ into positive integers. Define $B_1(r)=0$, and
for $n\ge 2$ define
\begin{equation*}
	B_n(r)
	=
	\max_{(n_1,\dots,n_d)\in\mathcal{C}(n-1)}
	\frac{
		\displaystyle
		\sum_{i=1}^{d}
		\lambda_{n_i}(r)\bigl(1+B_{n_i}(r)\bigr)
	}{
		\displaystyle
		r\prod_{i=1}^{d}\bigl(\lambda_{n_i}(r)-1\bigr)
	}.
\end{equation*}
This recursion is well defined because every part $n_i$ is
smaller than $n$, the set $\mathcal C(n-1)$ is finite, and
$\lambda_{n_i}(r)-1\ge 1>0$
for $r\ge 2$.
For illustration, the first two nontrivial functions in this recursion can be computed explicitly:
\[
B_2(r)
=
\frac{1}{r-1},
\]
and
\[
B_3(r)
=
\max\left\{
\frac{r}{r^2-r-1},
\frac{2}{(r-1)^2}
\right\}=\begin{cases}
	\displaystyle \frac{2}{(r-1)^2},
	& 2\le r\le 1+\sqrt{2},\\[2ex]
	\displaystyle \frac{r}{r^2-r-1},
	& r\ge 1+\sqrt{2}.
\end{cases}
\]

\begin{lemma}
	\label{lemma:Bdecreasing}
	For every integer $n\ge 2$, the function $B_n(r)$ is strictly decreasing in 
	$r$ on $[2,+\infty)$.
\end{lemma}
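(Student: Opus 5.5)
Strong induction on $n$ is natural here, because $B_n$ is a maximum of finitely many functions of $r$, each built from $\lambda_{n_i}$ and $B_{n_i}$ with $n_i<n$. A finite maximum of strictly decreasing functions is strictly decreasing. So it suffices to show that for each fixed composition $(n_1,\dots,n_d)\in\mathcal C(n-1)$ the function
\[
G(r)=\frac{\sum_{i=1}^{d}\lambda_{n_i}(r)\bigl(1+B_{n_i}(r)\bigr)}{r\prod_{i=1}^{d}\bigl(\lambda_{n_i}(r)-1\bigr)}
\]
is strictly decreasing on $[2,\infty)$. We use the convention $B_1\equiv 0$, which is constant, so the induction hypothesis for parts equal to $1$ only gives ``nonincreasing''. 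That is enough.

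Fix $2\le r<s$ and rewrite $G$ in the form of $H_{\boldsymbol c}$. Set $x_i=\lambda_{n_i}(r)$, $y_i=\lambda_{n_i}(s)$, $c_i=1+B_{n_i}(r)$ and $c_i'=1+B_{n_i}(s)$. Then
\[
G(r)=\frac1r H_{\boldsymbol c}(x_1,\dots,x_d),\qquad G(s)=\frac1s H_{\boldsymbol c'}(y_1,\dots,y_d).
\]
Each $\lambda_m(r)=r(r-1)^{m-1}$ is strictly increasing on $[2,\infty)$ and is at least $2>1$. Hence $1<x_i<y_i$ for all $i$, and Lemma~\ref{lem:H-coordinatewise} gives $H_{\boldsymbol c'}(y)<H_{\boldsymbol c'}(x)$. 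The induction hypothesis (or constancy when $n_i=1$) gives $0<c_i'\le c_i$. Since $H_{\boldsymbol c}(x)$ is linear and increasing in the coefficient vector $\boldsymbol c$ (the denominator is positive because $x_i>1$), we get $H_{\boldsymbol c'}(x)\le H_{\boldsymbol c}(x)$. Finally, $1/s<1/r$ and every quantity is positive.

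Chaining these inequalities gives $G(s)<G(r)$. Taking the maximum over the finite set $\mathcal C(n-1)$ preserves strictness: let the maximum at $s$ be attained by some composition. Then $B_n(s)=G_{\text{that}}(s)<G_{\text{that}}(r)\le B_n(r)$. The base case $n=2$ is covered by the same argument, since the only composition is $(1)$ with $B_1=0$; it can also be checked directly from $B_2(r)=1/(r-1)$.

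No step here is a real obstacle. The only point needing care is to separate the two sources of $r$-dependence: the coefficients $c_i$, where monotonicity comes from induction and may be non-strict, and the arguments $x_i$, where Lemma~\ref{lem:H-coordinatewise} supplies strictness. Strictness also comes for free from the prefactor $1/r$.
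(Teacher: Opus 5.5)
Your proof is correct and follows the paper's argument essentially step for step. Like the paper, you fix a composition, control the coefficient dependence by induction (non-strictly) and the argument dependence by Lemma~\ref{lem:H-coordinatewise} (strictly), and then pass to the maximum using a maximizer at the larger radius. The only difference is cosmetic: you change the arguments before the coefficients, whereas the paper changes the coefficients first.
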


\begin{proof}
	We use induction on $n$.
	For $n=2$,
	\[
	B_2(r)=
	\frac{1}{r-1},
	\]
	which is strictly decreasing for $r\ge 2$.
	
	Now let $n\ge 3$, and assume that $B_j(r)$ is strictly decreasing for every
	$2\le j<n$. Recall that $B_1(r)=0$, so $B_1(r)$ is nonincreasing as well.
	
	Fix a composition
	$\rho=(n_1,\dots,n_d)\in\mathcal{C}(n-1)$, 
	and define
	\begin{equation*}
		E_{\rho}(r)
		=
		\frac{
			\displaystyle
			\sum_{i=1}^{d}
			\lambda_{n_i}(r)\left(1+B_{n_i}(r)\right)
		}{
			\displaystyle
			r\prod_{i=1}^{d}
			\left(\lambda_{n_i}(r)-1\right)
		}.
	\end{equation*}
	We first prove that $E_{\rho}(r)$ is strictly decreasing.
	
	Take two real numbers $r_1$ and $r_2$ such that $2\le r_1<r_2$. For $j\in\{1,2\}$, write
	\[
	x_i^{(j)}=\lambda_{n_i}(r_j),
	\]
	and
	\[
	c_i^{(j)}=1+B_{n_i}(r_j).
	\]
	Since, for every integer $k\ge 1$, the function $\lambda_k(r)=r(r-1)^{k-1}$
	is strictly increasing in $r\in [2,\infty)$, we have $x_i^{(1)}<x_i^{(2)}$
	for every $i$. By the induction hypothesis and the fact that $B_1(r)=0$ when $n_i=1$,
	we have $c_i^{(2)}\le c_i^{(1)}$
	for every $i$.
	
	For a coefficient vector $\boldsymbol{c}=(c_1,\dots,c_d)$, write
	\[
	H_{\boldsymbol{c}}(x_1,\dots,x_d)
	=
	\frac{\sum_{i=1}^d c_i x_i}{\prod_{i=1}^d(x_i-1)}.
	\]
	Then
	\[
	E_{\rho}(r_j)
	=
	\frac{1}{r_j}
	H_{\boldsymbol{c}^{(j)}}\!
	\left(x_1^{(j)},\dots,x_d^{(j)}\right).
	\]
	Note that $x_i^{(j)}>1$ for $1\leq i\leq d$ and $j=1,2$. Since $$\frac{\partial H_c}{\partial c_j} = \frac{x_j}{\prod_{i=1}^{d} (x_i - 1)} > 0,$$ the function $H_{\boldsymbol{c}}(x_1,\dots,x_d)$ is strictly increasing in each coefficient
	$c_i$. Therefore
	\begin{align*}
		H_{\boldsymbol{c}^{(2)}}\!
		\left(x_1^{(2)},\dots,x_d^{(2)}\right)
		\le
		H_{\boldsymbol{c}^{(1)}}\!
		\left(x_1^{(2)},\dots,x_d^{(2)}\right).
	\end{align*}
	Since $x_i^{(1)}<x_i^{(2)}$ for every $i$,
	by Lemma~\ref{lem:H-coordinatewise}, we have
	\begin{align*}
		H_{\boldsymbol{c}^{(1)}}\!
		\left(x_1^{(2)},\dots,x_d^{(2)}\right)
		<
		H_{\boldsymbol{c}^{(1)}}\!
		\left(x_1^{(1)},\dots,x_d^{(1)}\right).
	\end{align*}
	Then
	\begin{equation*}
		\label{eq:H-strict}
		H_{\boldsymbol{c}^{(2)}}\!
		\left(x_1^{(2)},\dots,x_d^{(2)}\right)
		<
		H_{\boldsymbol{c}^{(1)}}\!
		\left(x_1^{(1)},\dots,x_d^{(1)}\right).
	\end{equation*}
	
	Since $1/r_2<1/r_1$, $H_{\boldsymbol{c}^{(2)}}\!
	\left(x_1^{(2)},\dots,x_d^{(2)}\right)\geq 0$ and $H_{\boldsymbol{c}^{(1)}}\!
	\left(x_1^{(1)},\dots,x_d^{(1)}\right)\geq 0$, we have
	\begin{align*}
		E_{\rho}(r_2)
		=
		\frac{1}{r_2}
		H_{\boldsymbol{c}^{(2)}}\!
		\left(x_1^{(2)},\dots,x_d^{(2)}\right)
		<
		\frac{1}{r_1}
		H_{\boldsymbol{c}^{(1)}}\!
		\left(x_1^{(1)},\dots,x_d^{(1)}\right)
		=
		E_{\rho}(r_1).
	\end{align*}
	Thus, for every fixed composition
	$\rho\in\mathcal C(n-1)$, the function $E_{\rho}$ is strictly decreasing.
	
	Finally, choose a composition $\rho_2\in \mathcal{C}(n-1)$ for which
	$B_n(r_2)=E_{\rho_2}(r_2)$. Then
	\[
	B_n(r_1)
	\ge E_{\rho_2}(r_1)
	>
	E_{\rho_2}(r_2)
	=
	B_n(r_2).
	\]
	Hence $B_n(r)$ is strictly decreasing in 
	$r$ on $[2,+\infty)$.
\end{proof}

For $n\ge 5$, define
\[
a_n=(n-1)^{1/(n-1)}
\]
and
\[
\gamma_n=1+a_n.
\]

We now establish two elementary inequalities involving $a_n$.
\begin{lemma}
	\label{lem:ak}
	For every integer $k$ with $2\le k\le n-1$, one has $a_n^k\le k$,
 with equality if and only if either $k=n-1$, or $n=5$ and $k=2$.
\end{lemma}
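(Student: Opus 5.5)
Take logarithms. Since $a_n^k=(n-1)^{k/(n-1)}$, the claim $a_n^k\le k$ is equivalent to
\[
\frac{\ln(n-1)}{n-1}\le \frac{\ln k}{k},
\]
that is, $f(n-1)\le f(k)$ for $f(x)=\ln x/x$. The function $f$ is strictly increasing on $(0,e]$ and strictly decreasing on $[e,\infty)$, since $f'(x)=(1-\ln x)/x^2$. So the plan is to compare $f(k)$ with $f(n-1)$ using the unimodality of $f$, splitting into the cases $k\ge 3$ and $k=2$.

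\emph{Case $3\le k\le n-1$.} Both $k$ and $n-1$ lie in $[3,\infty)\subset[e,\infty)$, where $f$ is strictly decreasing. Hence $f(k)\ge f(n-1)$, with equality exactly when $k=n-1$.

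\emph{Case $k=2$.} We need $f(2)\ge f(n-1)$, where $n-1\ge 4$ because $n\ge 5$. The key fact is $f(2)=\ln 2/2=\ln 4/4=f(4)$. Since $f$ is strictly decreasing on $[e,\infty)$, we get $f(n-1)\le f(4)=f(2)$, with equality if and only if $n-1=4$, i.e.\ $n=5$. The other equality condition, $k=n-1$, cannot hold here because $n-1\ge 4>2$. So for $k=2$ equality occurs exactly when $n=5$.

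Putting the two cases together gives $a_n^k\le k$ for all $2\le k\le n-1$, with equality precisely when $k=n-1$ or $(n,k)=(5,2)$. The only point needing care is $k=2$, where the monotonicity argument does not apply directly, since $2<e$, and one has to use the coincidence $2^{1/2}=4^{1/4}$.
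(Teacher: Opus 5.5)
Your proof is correct and follows essentially the same approach as the paper: the case split $k\ge 3$ versus $k=2$, strict monotonicity on $[e,\infty)$, and the coincidence $\sqrt{2}=4^{1/4}$. The only difference is that you work with $\ln x/x$ instead of $x^{1/x}$, which is the same argument after taking logarithms.
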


\begin{proof}
	The function $x^{1/x}$ is decreasing for $x \geq e$. If
	$3\le k\le n-1$, then
	\[
	a_n=(n-1)^{1/(n-1)}\le k^{1/k},
	\]
	and hence $a_n^k\le k$. For $k=2$, since $n-1\ge 4$,
	\[
	a_n=(n-1)^{1/(n-1)}\le 4^{1/4}=\sqrt{2}.
	\]
	Thus $a_n^2\le 2$.
    
Since $x^{1/x}$ is strictly decreasing on $[e,\infty)$,
equality for $3\le k\le n-1$ holds if and only if $k=n-1$.
For $k=2$, equality holds if and only if $n-1=4$, that is,
$n=5$.
\end{proof}

For $1\le k\le n-1$, define
\[
q_k=1+a_n^{-1}-a_n^{-k}.
\]
Then $q_k\ge 1$, and
\begin{equation}
	\label{eq:qfactor}
	(1+a)a_n^{k-1}-1=a_n^kq_k.
\end{equation}

\begin{lemma}
	\label{lem:partition}
	Let $m_1,\dots,m_d$ be positive integers such that
	$\sum_{i=1}^d m_i=m\le n-1$. Then
	\[
	\sum_{i=1}^{d}\bigl(a_n^{m_i-1}+m_i-1\bigr)
	\le
	m\prod_{i=1}^{d}q_{m_i},
	\]
with equality if and only if one of the following conditions holds:
\begin{enumerate}
    \item [(1)]$m_1=\cdots=m_d=1$;
    \item [(2)]$d=1$ and $m_1=m=n-1$;
    \item [(3)]$n=5$, $d=1$, and $m_1=m=2$.
\end{enumerate}
\end{lemma}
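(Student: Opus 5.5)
The plan is to reduce the inequality to a single-part estimate, using Lemma~\ref{lem:ak}, and then sum over the parts. Throughout write $a=a_n$, and note that $q_k-1=a^{-1}-a^{-k}\ge 0$, with $q_k=1$ if and only if $k=1$ (because $a>1$).

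\emph{Step 1: single-part bound.} I would show that for every $1\le k\le n-1$,
\[
a^{k-1}+k-1\le k\,q_k .
\]
For $k=1$ both sides equal $1$. For $k\ge 2$, expand
\[
k q_k=k+\frac{k}{a}\bigl(1-a^{1-k}\bigr)=k+\frac{k}{a^{k}}\bigl(a^{k-1}-1\bigr).
\]
The claim then becomes
\[
a^{k-1}-1\le \frac{k}{a^k}\bigl(a^{k-1}-1\bigr).
\]
Since $a^{k-1}-1>0$, this is equivalent to $a^k\le k$, which is exactly Lemma~\ref{lem:ak}. Equality in this single-part bound therefore holds if and only if one of the following occurs: $k=1$; or $k=n-1$; or $n=5$ and $k=2$.

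\emph{Step 2: summation.} Each part satisfies $m_i\le m\le n-1$, so Step~1 applies to every part. Since every $q_{m_j}\ge 1$, we have $q_{m_i}\le \prod_{j}q_{m_j}$ for each $i$. Hence
\[
\sum_{i=1}^d\bigl(a^{m_i-1}+m_i-1\bigr)\le \sum_{i=1}^d m_i q_{m_i}\le \sum_{i=1}^d m_i\prod_{j=1}^d q_{m_j}=m\prod_{j=1}^d q_{m_j}.
\]

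\emph{Step 3: equality.} Equality in the whole chain requires equality in both inequalities.
\begin{itemize}
\item \emph{Second inequality.} Suppose $d\ge 2$ and some $m_j\ge 2$, so that $q_{m_j}>1$. Then for any $i\ne j$ we get $q_{m_i}<\prod q$ strictly, and the second inequality is strict.
\item \emph{Case $d\ge 2$.} By the previous point, equality forces all $m_i=1$, which is condition~(1). Conversely, if all parts equal $1$, both sides equal $m$.
\item \emph{Case $d=1$.} The second inequality is trivially an equality. Equality in Step~1 then requires $m_1=1$ (again condition~(1)), or $m_1=n-1$ (condition~(2)), or $n=5$ and $m_1=2$ (condition~(3)). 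Conversely, each of these gives equality by Step~1.
\end{itemize}

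The only potentially delicate point is the equality bookkeeping, since all inequalities are elementary once Step~1 is reduced to Lemma~\ref{lem:ak}.
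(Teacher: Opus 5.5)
Your proposal is correct and follows essentially the same route as the paper: you reduce the single-part bound $a_n^{k-1}+k-1\le kq_k$ to $a_n^k\le k$ via Lemma~\ref{lem:ak}, then sum using $q_{m_j}\ge 1$, and then analyse the equality cases. The only difference is organizational: you split the equality analysis on $d\ge 2$ versus $d=1$ before using the equality cases of Lemma~\ref{lem:ak}, which is a slightly cleaner ordering of the same argument.
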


\begin{proof}
	We first show that, for $1\le k\le n-1$,
	\[
	a_n^{k-1}+k-1\le kq_k.
	\]
	For $k=1$, equality holds. For $k\ge 2$, Lemma~\ref{lem:ak} gives
	\[
	a_n^k\bigl(kq_k-a_n^{k-1}-k+1\bigr)
	=
	(k-a_n^k)(a_n^{k-1}-1)
	\ge 0.
	\]
Then 
\[
a_n^{k-1}+k-1\le kq_k.
\]
Therefore,
\begin{equation}
\label{eq:partition-first}
\sum_{i=1}^{d}
\bigl(a_n^{m_i-1}+m_i-1\bigr)
\le
\sum_{i=1}^{d}m_iq_{m_i}.
\end{equation}
Since every $q_{m_i}\ge 1$, we have
\begin{equation}
\label{eq:partition-second}
\sum_{i=1}^{d}m_iq_{m_i}
\le
\sum_{i=1}^{d}m_i\prod_{j=1}^{d}q_{m_j}
=
m\prod_{j=1}^{d}q_{m_j}.
\end{equation}
This proves the inequality.

We now characterize the equality cases. Equality in 
\[
\sum_{i=1}^{d}\bigl(a_n^{m_i-1}+m_i-1\bigr)
	\le
	m\prod_{i=1}^{d}q_{m_i}
\]  holds if and only if equality holds in both 
Inequalities (\ref{eq:partition-first}) and (\ref{eq:partition-second}). 
For a fixed $k$, since 
\[
a_n^k\bigl(kq_k-a_n^{k-1}-k+1\bigr)=(k-a_n^k)(a_n^{k-1}-1),
\]
equality in $a_n^{k-1}+k-1\le kq_k$
holds if and only if $(k-a_n^k)(a_n^{k-1}-1)=0$. By Lemma~\ref{lem:ak}, 
$k-a_n^k=0$ if and only if $k=n-1$ or $(n,k)=(5,2)$, and clearly, $a_n^{k-1}-1=0$ if and only if $k=1$. Therefore
equality in Inequality (\ref{eq:partition-first}) holds if and only if every $m_i$ satisfies one of these three
conditions: $m_i=n-1$, $(n, m_i)=(5,2)$ or $m_i=1$.

If $m_i=n-1$ for some $i$, then we have $d=1$
as $\sum_{i=1}^d m_i\le n-1$, and hence
$m_1=m=n-1$. In this case equality in Inequality \ref{eq:partition-second} is automatic.  

If $m_i=1$ for every $i$, then $q_{m_i}=1$ for every $i$,
so equality also holds in Inequality \ref{eq:partition-second}.

The only remaining case is $n=5$, with at least one part equal to $2$ and all the remaining parts belonging to $\{1,2\}$.
If $d\ge2$, then $q_2>1$, and hence Inequality (\ref{eq:partition-second}) is
strict. Therefore equality is possible only when $d=1$, which
implies $m_1=m=2$.

Thus we obtain the desired condition for equality.
\end{proof}

We now combine the preceding two elementary inequalities to obtain the required upper bound for $B_m(\gamma_n)$. 
\begin{lemma}
	\label{lem:criticalB}
	Let $n$ be an integer with $n\geq 5$. Then, for each integer $m$ with $1\le m\le n$,
	\[
	B_m(\gamma_n)= \frac{m-1}{a_n^{m-1}}.
	\]
    Moreover, when $m=n$, the maximum in the definition of
$B_n(\gamma_n)$ is obtained precisely by the two compositions $(1,1,\dots,1)\in\mathcal{C}(n-1)$ and $(n-1)\in\mathcal{C}(n-1)$.
\end{lemma}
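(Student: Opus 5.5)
We argue by strong induction on $m$, with $n\ge 5$ fixed and $r=\gamma_n=1+a_n$ throughout; write $a=a_n$. The case $m=1$ is immediate, since $B_1=0$. The key simplification is that $r-1=a$, so
\[
\lambda_k(\gamma_n)=(1+a)a^{k-1}, \qquad \lambda_k(\gamma_n)-1=a^kq_k
\]
by \eqref{eq:qfactor}.

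Assume $B_k(\gamma_n)=(k-1)/a^{k-1}$ for all $k<m$. Fix a composition $(m_1,\dots,m_d)\in\mathcal C(m-1)$. Then each summand of the numerator becomes
\[
\lambda_{m_i}(1+B_{m_i})=(1+a)a^{m_i-1}\Bigl(1+\frac{m_i-1}{a^{m_i-1}}\Bigr)=(1+a)\bigl(a^{m_i-1}+m_i-1\bigr).
\]
Since $\sum m_i=m-1$, the denominator is $r\prod a^{m_i}q_{m_i}=(1+a)a^{m-1}\prod q_{m_i}$. The factors $(1+a)$ cancel, and the value of the composition equals
\[
\frac{\sum_i\bigl(a^{m_i-1}+m_i-1\bigr)}{a^{m-1}\prod_i q_{m_i}}.
\]
Because $m-1\le n-1$, Lemma~\ref{lem:partition} (with $m$ there replaced by $m-1$) bounds the numerator by $(m-1)\prod q_{m_i}$. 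Hence every composition gives at most $(m-1)/a^{m-1}$. The all-ones composition attains this value, since all $q_1=1$ and the numerator is $m-1$, so $B_m(\gamma_n)=(m-1)/a^{m-1}$.

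For the equality statement at $m=n$, a composition of $n-1$ attains the maximum exactly when equality holds in Lemma~\ref{lem:partition} with total $n-1$. The lemma's three cases reduce here to two. Case (3) requires total $2$, i.e.\ $n-1=2$, which is impossible since $n\ge5$. So only case (1), the composition $(1,\dots,1)$, and case (2), the composition $(n-1)$, remain.

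The step needing care is the bookkeeping: the inductive hypothesis is used only for parts $m_i\le m-1\le n-1$, and the hypothesis $m\le n$ is exactly what allows Lemma~\ref{lem:partition} to apply. There is no real obstacle beyond the algebraic identity $\lambda_k-1=a^kq_k$.
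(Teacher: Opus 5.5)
Your proposal is correct and follows essentially the same argument as the paper. Both use induction on $m$, reducing each composition's value via $\lambda_k(\gamma_n)=(1+a_n)a_n^{k-1}$ and Equation~\eqref{eq:qfactor} to $\sum_i(a_n^{m_i-1}+m_i-1)/(a_n^{m-1}\prod_i q_{m_i})$, bound it by Lemma~\ref{lem:partition}, show it is attained by $(1,\dots,1)$, and at $m=n$ characterize equality by excluding case (3) because the parts sum to $n-1\ge 4$.
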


\begin{proof}
	For a fixed $n\geq 5$,	we use induction on $m$. The assertion is immediate for $m=1$. 
    Now let $2\le m\le n$, and assume that for every $1\le l<m$,
\[
B_l(\gamma_n)=\frac{l-1}{a_n^{l-1}}.
\]
Let $(m_1,\dots,m_d)\in\mathcal{C}(m-1)$. Since $\lambda_k(\gamma_n)=(1+a_n)a_n^{k-1}$, the induction hypothesis gives
\[
\lambda_{m_i}(\gamma_n)\bigl(1+B_{m_i}(\gamma_n)\bigr)=(1+a_n)\bigl(a_n^{m_i-1}+m_i-1\bigr).
\]
Therefore 
\[
\sum_{i=1}^{d}\lambda_{m_i}(\gamma_n)\bigl(1+B_{m_i}(\gamma_n)\bigr)=(1+a_n)\sum_{i=1}^{d}\bigl(a_n^{m_i-1}+m_i-1\bigr).
\]
By Equation~\eqref{eq:qfactor} and the fact $m_1+\cdots+m_d=m-1$, 
\[\prod_{i=1}^{d}\bigl(\lambda_{m_i}(\gamma_n)-1\bigr)=\prod_{i=1}^{d}\bigl((1+a_n)a_n^{m_i-1}-1\bigr)=a_n^{m-1}\prod_{i=1}^{d}q_{m_i}.\]
Therefore, for any $(m_1,\dots,m_d)\in\mathcal{C}(m-1)$, we have
\begin{equation}\label{eq:maxdecom}
\frac{\sum_{i=1}^{d}\lambda_{m_i}(\gamma_n)\bigl(1+B_{m_i}(\gamma_n)\bigr)}{\gamma_n\prod_{i=1}^{d}\bigl(\lambda_{m_i}(\gamma_n)-1\bigr)}
= \frac{\sum_{i=1}^{d}\bigl(a_n^{m_i-1}+m_i-1\bigr)}{a_n^{m-1}\prod_{i=1}^{d}q_{m_i}}.	
\end{equation}
By Lemma \ref{lem:partition}, we have
\[
\frac{\sum_{i=1}^{d}\lambda_{m_i}(\gamma_n)\bigl(1+B_{m_i}(\gamma_n)\bigr)}{\gamma_n\prod_{i=1}^{d}\bigl(\lambda_{m_i}(\gamma_n)-1\bigr)}\le\frac{m-1}{a_n^{m-1}},
\]
and then
\[
B_m(\gamma_n)
\le
\frac{m-1}{a_n^{m-1}}.
\]

Consider the composition
$(1,1,\dots,1)\in\mathcal{C}(m-1)$. 
Note that $B_1(\gamma_n)=0$, $\lambda_1(\gamma_n)=\gamma_n$ and $\lambda_1(\gamma_n)-1=a_n$. 
Then
\[
\frac{\sum_{i=1}^{d}\lambda_{1}(\gamma_n)\left(1+B_{1}(\gamma_n)\right)}{ \gamma_n\prod_{i=1}^{d}\left(\lambda_{1}(\gamma_n)-1\right)}=\frac{(m-1)\lambda_1(\gamma_n)
}{\gamma_n\bigl(\lambda_1(\gamma_n)-1\bigr)^{m-1}}=
\frac{m-1}{a_n^{m-1}}.
\]
Thus
\[
B_m(\gamma_n)
\ge
\frac{m-1}{a_n^{m-1}}.
\]

Consequently,
\[
B_m(\gamma_n)
=
\frac{m-1}{a_n^{m-1}}.
\]

It remains to determine the compositions attaining the maximum when
$m=n$. 
Let $(n_1,\ldots,n_d)\in\mathcal C(n-1)$
be a composition that attains the maximum of
$B_n(\gamma_n)$, that is, 
\[B_n(\gamma_n)=\frac{
\sum_{i=1}^{d}
\lambda_{n_i}(\gamma_n)
\bigl(1+B_{n_i}(\gamma_n)\bigr)
}{
\gamma_n
\prod_{i=1}^{d}
\bigl(\lambda_{n_i}(\gamma_n)-1\bigr)
}.\]
By Equation (\ref{eq:maxdecom}) and the fact $B_n(\gamma_n)=1$, we have
\[\sum_{i=1}^{d}\bigl(a_n^{n_i-1}+n_i-1\bigr)=(n-1)\prod_{i=1}^{d}q_{n_i}.\]
By the characterization of equality in 
Lemma~\ref{lem:partition},
we have $(n_1,\ldots,n_d)=(1,\dots,1)\in\mathcal{C}(n-1)$ or $(n_1,\ldots,n_d)=(n-1)\in\mathcal{C}(n-1)$.
Note that the exceptional case in Lemma~\ref{lem:partition} cannot occur here, since the
sum of the parts is $n-1$.

This completes the proof.
\end{proof}

\section{Proof of Theorem \ref{thm:main}}

The next lemma is the central comparison estimate.
\begin{lemma}
	\label{lem:comparison}
	Let $T$ be a tree of order $n$, let $u\in V(T)$, and let
	$z\in\mathbb{C}$ satisfy $|z|=r\ge 2$. Then
	\begin{equation*}
		\label{eq:comparison}
		|\Phi_{T-u}(z)|
		\le
		B_n(r)|\Phi_{T,u}(z)|.
	\end{equation*}
\end{lemma}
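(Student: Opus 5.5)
We will prove the statement by strong induction on $n$. For $n=1$, $T-u$ is empty, so $\Phi_{T-u}=0$ and $B_1(r)=0$; the inequality is trivial. Now let $n\ge 2$. Let $T_1,\dots,T_d$ be the components of $T-u$, with orders $n_1,\dots,n_d$, and let $u_i$ be the neighbour of $u$ in $T_i$. Since $n_1+\cdots+n_d=n-1$, the vector $\rho=(n_1,\dots,n_d)$ is a composition in $\mathcal C(n-1)$ (the order of the components is irrelevant). The strategy is to bound $|\Phi_{T-u}(z)|$ from above and $|\Phi_{T,u}(z)|$ from below in terms of the quantities $|\Phi_{T_i,u_i}(z)|$. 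We then apply the monotonicity of $H_{\boldsymbol c}$ to replace these quantities by the extremal values $\lambda_{n_i}(r)$.

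For the numerator, we apply \eqref{eq:decomposition} to each $T_i$ at $u_i$:
\[
\Phi_{T-u}(z)=\sum_{i=1}^d\Phi_{T_i}(z)=\sum_{i=1}^d\bigl(\Phi_{T_i,u_i}(z)+\Phi_{T_i-u_i}(z)\bigr).
\]
Since $n_i<n$, the induction hypothesis gives $|\Phi_{T_i-u_i}(z)|\le B_{n_i}(r)|\Phi_{T_i,u_i}(z)|$. Writing $x_i=|\Phi_{T_i,u_i}(z)|$, we obtain $|\Phi_{T-u}(z)|\le\sum_i c_ix_i$ with $c_i=1+B_{n_i}(r)>0$. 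For the denominator, we use \eqref{eq:rooted-product} and the triangle inequality:
\[
|\Phi_{T,u}(z)|=r\prod_{i=1}^d|1+\Phi_{T_i,u_i}(z)|\ge r\prod_{i=1}^d(x_i-1).
\]
By Lemma~\ref{lowerbound}, $x_i\ge\lambda_{n_i}(r)=r(r-1)^{n_i-1}\ge 2>1$, so every factor $x_i-1$ is positive and the lower bound is meaningful.

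Combining the two bounds gives
\[
\frac{|\Phi_{T-u}(z)|}{|\Phi_{T,u}(z)|}\le \frac1r H_{\boldsymbol c}(x_1,\dots,x_d).
\]
Because $x_i\ge\lambda_{n_i}(r)>1$ for every $i$, Lemma~\ref{lem:H-coordinatewise} lets us replace each $x_i$ by $\lambda_{n_i}(r)$ without decreasing $H_{\boldsymbol c}$. The result is exactly the expression $E_\rho(r)$ from the proof of Lemma~\ref{lemma:Bdecreasing}, and $E_\rho(r)\le B_n(r)$ by the definition of $B_n$ as a maximum over $\mathcal C(n-1)$. This completes the induction.

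The main obstacle is the sign and direction bookkeeping. The coefficients $c_i$ must be positive, and the lower bound $|1+w|\ge|w|-1$ must be applied where the $x_i$ exceed $1$; Lemma~\ref{lowerbound} guarantees this, which is why the hypothesis $r\ge2$ is needed. The argument also has to cover the degenerate case $n_i=1$, where $T_i-u_i$ is empty and $B_1=0$. The division by $|\Phi_{T,u}(z)|$ needs no separate check, since the lower bound shows $|\Phi_{T,u}(z)|>0$.
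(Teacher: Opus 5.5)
Your proposal is correct and follows the paper's proof essentially step for step. Both use induction on $n$, bound the numerator by $\sum_i(1+B_{n_i}(r))x_i$ using the induction hypothesis, and bound the denominator below by $r\prod_i(x_i-1)$ via \eqref{eq:rooted-product} and Lemma~\ref{lowerbound}; then Lemma~\ref{lem:H-coordinatewise} replaces $x_i$ by $\lambda_{n_i}(r)$, and the definition of $B_n(r)$ as a maximum over $\mathcal C(n-1)$ finishes the argument.
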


\begin{proof}
	We proceed by induction on $n$. If $n=1$, then $T-u$ is empty and the
	left-hand side is zero.
	
	Assume $n\ge 2$. Let $T_1,\dots,T_d$ be the components of $T-u$, rooted
	at the corresponding neighbors $u_1,\dots,u_d$ of $u$, and let
	$n_i=|V(T_i)|$. For convenience, set
	\[
	L_i=\Phi_{T_i,u_i}(z)\] and 
	\[
	F_i=\Phi_{T_i-u_i}(z).
	\]
	By the induction hypothesis,
	\begin{equation}\label{Ineq:between}
	|F_i|\le B_{n_i}(r)|L_i|.
	\end{equation}
	Since $\Phi_{T_i}(z)=L_i+F_i$ and
	$\Phi_{T-u}(z)=\sum_{i=1}^d\Phi_{T_i}(z)$, we obtain
	\[
	|\Phi_{T-u}(z)|\le\sum_{i=1}^d \bigl(|F_i|+|L_i|\bigr) 
	\le
	\sum_{i=1}^{d}\bigl(1+B_{n_i}(r)\bigr)|L_i|.
	\]
	On the other hand, the product formula \eqref{eq:rooted-product} and the
	reverse triangle inequality give
	\[
	|\Phi_{T,u}(z)|
	=
	r\prod_{i=1}^{d}|1+L_i|
	\ge
	r\prod_{i=1}^{d}\bigl(|L_i|-1\bigr).
	\]
	By Lemma~\ref{lowerbound}, we have 
	\[
	|L_i|
	\ge
	\lambda_{n_i}(r)
	>1,
	\]
	and then $|\Phi_{T,u}(z)|>0$.
	Consequently, 
\begin{equation}\label{Ineq:maxdecom}
	\frac{|\Phi_{T-u}(z)|}{|\Phi_{T,u}(z)|}
	\le
	\frac{
		\sum_{i=1}^d |L_i|\bigl(1+B_{n_i}(r)\bigr)
	}{
		r\prod_{i=1}^d(|L_i|-1)
	}
	\le
	\frac{
		\sum_{i=1}^d \lambda_{n_i}(r)\bigl(1+B_{n_i}(r)\bigr)
	}{
		r\prod_{i=1}^d(\lambda_{n_i}(r)-1)
	},
\end{equation}
	where the last inequality holds by Lemma~\ref{lem:H-coordinatewise}. 
	By the definition of
	$B_n(r)$, we have 
	\[
	\frac{
		\sum_{i=1}^d \lambda_{n_i}(r)\bigl(1+B_{n_i}(r)\bigr)
	}{
		r\prod_{i=1}^d(\lambda_{n_i}(r)-1)
	}\leq B_n(r).
	\]
	Thus 
	\[	\frac{|\Phi_{T-u}(z)|}{|\Phi_{T,u}(z)|}\le B_n(r).\]
	This completes the proof.	
\end{proof}

We now prove Theorem~\ref{thm:main}.

\begin{proof}[Proof of Theorem~\ref{thm:main}]

For $n=2$, the only tree is $K_{1,1}$, and
\[
\Phi_{K_{1,1}}(z)=z(z+2).
\]
Thus the unique nonzero root is $z=-2$, and
$|z|=2=1+(2-1)^{1/(2-1)}$.
Hence equality holds, as $n=2$ is even.

For $n=3$, the only tree is $K_{1,2}$, and
\[
\Phi_{K_{1,2}}(z)=z(z^2+2z+3).
\]
Its two nonzero roots have modulus
$\sqrt3<1+\sqrt2$,
so equality does not occur.

For $n=4$, the required radius is
$1+\sqrt[3]{3}$.
By Theorem~\ref{K1,3},  equality holds only for $K_{1,3}$.
Moreover,
\[
\Phi_{K_{1,3}}(z)
=
z\bigl((z+1)^3+3\bigr),
\]
and hence $z=-1-\sqrt[3]{3}$ is the unique root satisfying
\[
|z|=1+\sqrt[3]{3}.
\]

Thus the conclusion holds for $n=2,3,4$.
Now assume that $n\ge 5$. 

	Let $z\in\mathbb{C}$ satisfy $|z|=r>\gamma_n$, and choose any vertex
	$u\in V(T)$. Since $\gamma_n>2$,  Lemma~\ref{lem:comparison} gives
	\[
	|\Phi_{T-u}(z)|
	\le
	B_n(r)|\Phi_{T,u}(z)|.
	\]
	By Lemmas \ref{lemma:Bdecreasing} and \ref{lem:criticalB},
	\[
	B_n(r)<B_n(\gamma_n)= \frac{n-1}{a_n^{n-1}}=1.
	\]
	Moreover, by Lemma~\ref{lowerbound}, we have
\[
|\Phi_{T,u}(z)|
\ge
r(r-1)^{n-1}>0.
\]
    Consequently,
	\[
	|\Phi_{T-u}(z)|<|\Phi_{T,u}(z)|.
	\]
	By Equation \eqref{eq:decomposition} and the reverse triangle inequality, we have
	\[
	|\Phi_T(z)|
	\ge
	|\Phi_{T,u}(z)|-|\Phi_{T-u}(z)|
	>0.
	\]
	Thus $\Phi_T(z)\ne 0$ whenever $|z|>\gamma_n$.

We next characterize the boundary case. Suppose that $T$ is a tree of order $n$ such that $\Phi_T(z)=0$ and $|z|=\gamma_n$. 
For any vertex $u\in V(T)$, let
$T_1,\dots,T_d$ be the components of $T-u$. Let $u_i$ be the unique
neighbor of $u$ in $T_i$, and set
$n_i=|V(T_i)|$ and  $L_i=\Phi_{T_i,u_i}(z)$.
Then
\[
(n_1,\dots,n_d)\in\mathcal{C}(n-1).
\]
By Lemma~\ref{lowerbound}, we have $|L_i|
\ge
\lambda_{n_i}(\gamma_n)
>1$.
Hence, by Equation \eqref{eq:rooted-product}, we have 
\[
|\Phi_{T,u}(z)|
=
\gamma_n\prod_{i=1}^{d}|1+L_i|
\ge
\gamma_n\prod_{i=1}^{d}(|L_i|-1)
>0.
\]
Since $\Phi_T(z)=0$, Equation \eqref{eq:decomposition} yields
$\Phi_{T-u}(z)=-\Phi_{T,u}(z)$,
and therefore
\[
\frac{|\Phi_{T-u}(z)|}{|\Phi_{T,u}(z)|}=1.
\]
By Lemma~\ref{lem:comparison}, for each $i$,
\[
|\Phi_{T_i-u_i}(z)|
\le
B_{n_i}(\gamma_n)|\Phi_{T_i,u_i}(z)|.
\]
Replacing the induction-hypothesis Inequality \eqref{Ineq:between} in the proof of
Lemma~\ref{lem:comparison} by above inequalities and repeating the
derivation of Inequality \eqref{Ineq:maxdecom} with $r=\gamma_n$, we have
\[
1=
\frac{|\Phi_{T-u}(z)|}{|\Phi_{T,u}(z)|}\le
\frac{
\sum_{i=1}^{d}
\lambda_{n_i}(\gamma_n)
\bigl(1+B_{n_i}(\gamma_n)\bigr)
}{
\gamma_n
\prod_{i=1}^{d}
\bigl(\lambda_{n_i}(\gamma_n)-1\bigr)
}\le B_n(\gamma_n)=1.
\]
Hence
\[
\frac{
\sum_{i=1}^{d}
\lambda_{n_i}(\gamma_n)
\bigl(1+B_{n_i}(\gamma_n)\bigr)
}{
\gamma_n
\prod_{i=1}^{d}
\bigl(\lambda_{n_i}(\gamma_n)-1\bigr)
}
=
B_n(\gamma_n),
\]
so the composition
$(n_1,\ldots,n_d)\in\mathcal C(n-1)$
attains the maximum in the definition of $B_n(\gamma_n)$.
By the condition for 
equality in Lemma~\ref{lem:criticalB}, we have
$(n_1,\dots,n_d)=(1,\dots,1)$
or $(n_1,\dots,n_d)=(n-1)$,
which implies that $d_T(u)\in\{1,n-1\}$.
Since $u$ was arbitrary, every vertex of the tree $T$ has degree either $1$
or $n-1$. Thus $T\cong K_{1,n-1}$.

When $T\cong K_{1,n-1}$, we have
\[
\Phi_{K_{1,n-1}}(z)
=
z\bigl((z+1)^{n-1}+n-1\bigr).
\]
Hence, its nonzero subtree roots are
\[z_k=a_n\cos\left(\frac{2k-1}{n-1}\pi\right)-1+\mathrm{i}\cdot a_n\sin\left(\frac{2k-1}{n-1}\pi\right),\qquad
k=1,2,\ldots,n-1.\]
Then
\[
|z_k|^2=a_n^2+1-2a_n\cos\left(\frac{2k-1}{n-1}\pi\right).
\]
Thus $|z_k|=1+a_n$
if and only if
\[
\cos\left(\frac{2k-1}{n-1}\pi\right)=-1,
\]
which is equivalent to $2k=n$. 
Consequently, there exists a nonzero subtree root of
$K_{1,n-1}$ with modulus $1+a_n$ if and only if $n$ is even.
In this case $k=n/2$, and hence the unique boundary root is
$z_{n/2}=-1-a_n$.
Conversely, if $n$ is even, then $k=n/2$ is an integer and
$z_{n/2}=-1-a_n$ attains the upper bound.

This completes the proof.
\end{proof}

\section{A lower bound and sharpness results}

To establish a lower bound on the modulus of nonzero subtree roots, the following  result is essential.
\begin{lemma}\emph{\cite{1983On}}\label{L1}
	Let $T$ be a tree of order $n$. Then $s_1(T)=n$ and for $k=2,\ldots, n$,
	\[n-k+1\leq s_k(T)\leq \binom{n-1}{k-1}.\]
\end{lemma}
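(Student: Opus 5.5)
We prove the three assertions separately. The first two are elementary, and the upper bound is a short induction that rests on one auxiliary count.

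\emph{The value of $s_1(T)$.} The subtrees of order $1$ are exactly the single vertices, so $s_1(T)=n$.

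\emph{Lower bound.} We use induction on $n$, deleting a leaf. The case $n=1$ is trivial. Let $n\ge 2$ and let $v$ be a leaf of $T$. Every subtree either avoids $v$, in which case it is a subtree of the tree $T-v$ of order $n-1$, or contains $v$. Hence
\[
s_k(T)=s_k(T-v)+s_k(T,v).
\]
For $k=n$ both sides of the claimed bound equal $1$. For $2\le k\le n-1$, the induction hypothesis gives $s_k(T-v)\ge (n-1)-k+1=n-k$. Moreover $s_k(T,v)\ge 1$: starting from $\{v\}$ and repeatedly adding a vertex adjacent to the current subtree, which is possible by connectivity of $T$, produces a subtree of order $k$ containing $v$. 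Therefore $s_k(T)\ge n-k+1$.

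\emph{Upper bound: the auxiliary count.} We first show that for any tree $T'$ of order $m$, any vertex $w$, and any $j\ge 1$,
\[
s_j(T',w)\le \binom{m-1}{j-1}.
\]
A subtree of a tree is determined by its vertex set, since it is the induced subgraph on that set. So the map sending a subtree containing $w$ to its vertex set with $w$ removed is an injection into the $(j-1)$-subsets of $V(T')\setminus\{w\}$.

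\emph{Upper bound: the induction.} We again induct on $n$ and delete a leaf $v$ with neighbour $w$. Every subtree of order $k\ge 2$ that contains $v$ also contains $w$. Removing $v$ from it therefore gives a bijection onto the subtrees of $T-v$ of order $k-1$ containing $w$, so $s_k(T,v)=s_{k-1}(T-v,w)$. For $2\le k\le n-1$, the induction hypothesis and the auxiliary count give
\[
s_k(T)=s_k(T-v)+s_{k-1}(T-v,w)\le\binom{n-2}{k-1}+\binom{n-2}{k-2}=\binom{n-1}{k-1},
\]
by Pascal's rule. When $k=n$ we have $s_n(T)=1=\binom{n-1}{n-1}$.

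The only step requiring any care is identifying the right local count to make Pascal's rule apply. Once the injectivity observation is in place, the rest is routine.
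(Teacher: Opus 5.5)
Your proof is correct. The paper gives no proof of this lemma; it quotes it from Jamison and uses it only as a black box in the proof of Theorem~\ref{T1}, so there is no in-paper argument to compare against.

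Your leaf-deletion induction is a complete and self-contained justification. The decomposition $s_k(T)=s_k(T-v)+s_k(T,v)$ drives both bounds. For the upper bound, the identity $s_k(T,v)=s_{k-1}(T-v,w)$ combined with the injective local count $s_j(T',w)\le\binom{m-1}{j-1}$ is exactly what Pascal's rule needs. The base cases and the separate treatment of $k=n$ are handled properly.
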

We now give the lower bound on the modulus of nonzero subtree roots. 
\begin{theorem}\label{T1}
	Let $T$ be a tree of order $n\geq 2$. Then every nonzero subtree root of $T$ satisfies
	\[ |z|>\sqrt[n-1]{n-1}-1.\]
\end{theorem}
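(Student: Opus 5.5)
Write $\Phi_T(z)=zP(z)$ with $P(z)=\sum_{k=1}^{n}s_k(T)z^{k-1}$, so the nonzero subtree roots are exactly the roots of $P$. The plan is to bound these roots from below using the coefficient bounds of Lemma~\ref{L1}, by comparison with the star. By Lemma~\ref{L1}, $s_1(T)=n$ and $0\le s_k(T)\le\binom{n-1}{k-1}$ for $k\ge2$. For $K_{1,n-1}$ these bounds are attained, and $P_{\rm star}(z)=(1+z)^{n-1}+n-1$, whose roots all have modulus at least $a-1$, where $a=(n-1)^{1/(n-1)}$. So the natural inequality to aim for is, for $|z|=\rho\le a-1$,
\[
\Bigl|\sum_{k=2}^{n}s_k(T)z^{k-1}\Bigr|\le\sum_{k=2}^{n}\binom{n-1}{k-1}\rho^{k-1}=(1+\rho)^{n-1}-1\le a^{n-1}-1=n-2.
\]
Then the triangle inequality gives $|P(z)|\ge n-(n-2)=2>0$. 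This already shows that there are no roots with $0<|z|\le a-1$, which is exactly the strict inequality $|z|>a-1$ claimed.

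The steps in order are:
\begin{enumerate}
\item Factor out $z$ and separate the constant term $s_1(T)=n$ of $P$.
\item Bound the tail termwise by the upper bound $s_k(T)\le\binom{n-1}{k-1}$ of Lemma~\ref{L1}, and sum with the binomial theorem.
\item Use monotonicity in $\rho$ together with $(1+\rho)^{n-1}\le a^{n-1}=n-1$ for $\rho\le a-1$.
\item Conclude $|P(z)|\ge 2$.
\end{enumerate}
For $n=2$ we have $a-1=0$, so the statement is just that roots are nonzero, which is trivial; the general argument also covers this case.

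I do not expect a real obstacle here. The only point to check is the arithmetic $n-(n-2)=2$, which even leaves slack, so strictness comes for free. One could sharpen the bound using $s_1=n$ against the star's constant term $n$, but that is unnecessary.

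The sharpness asserted in the abstract is a separate matter for later: as $n\to\infty$ the star's root closest to $0$, namely $-1+ae^{i\pi/(n-1)}$, has modulus tending to $0$, and so does $a-1$. I would treat that separately and not as part of this proof.
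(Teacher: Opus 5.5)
Your proof is correct and is essentially the paper's argument. Both factor out $z$, isolate the constant term $s_1(T)=n$, and bound the remaining terms using $s_k(T)\le\binom{n-1}{k-1}$ and the binomial theorem, giving $|\Phi_T(z)/z|\ge n+1-(1+|z|)^{n-1}\ge 2$ whenever $0<|z|\le\sqrt[n-1]{n-1}-1$.
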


\begin{proof}
The conclusion is immediate for $n=2$. We now assume that $n\ge3$.
	It suffices to prove that for  $0<|z|\leq\sqrt[n-1]{n-1}-1$,
	\[\left|\frac{\varPhi_{T}(z)}{z}\right|\geq 2.\]
	Note that $s_1(T)=n$ for any tree $T$ of order $n$. By the reverse triangle inequality and Lemma \ref{L1}, we have
	\begin{align*}
		\left|\frac{\varPhi_{T}(z)}{z}\right|&= \left|s_{n}(T)z^{n-1}+s_{n-1}(T)z^{n-2}+\cdots+s_{2}(T)z+n\right|\\
		&\geq n-\left(s_{n}(T)|z|^{n-1}+s_{n-1}(T)|z|^{n-2}+\cdots+s_{2}(T)|z|\right)\\
		&\geq n-\left(\binom{n-1}{n-1}|z|^{n-1}+\binom{n-1}{n-2}|z|^{n-2}+\cdots+\binom{n-1}{1}|z|+1-1\right)\\
		&=n+1-(|z|+1)^{n-1}\\
		&\geq 2.
	\end{align*}
	This completes the proof.
\end{proof}

We examine the sharpness of the upper and lower bounds on the modulus of subtree roots. In this section, 
for any integer $n\geq 2$, define
\[
a_n=(n-1)^{1/(n-1)}.
\]
Recall that for the star $K_{1,n-1}$,
\[
\Phi_{K_{1,n-1}}(z)
=
z\bigl((z+1)^{n-1}+n-1\bigr),
\]
and its nonzero subtree roots are
\[z_k=a_n\cos\left(\frac{2k-1}{n-1}\pi\right)-1+\mathrm{i}\cdot a_n\sin\left(\frac{2k-1}{n-1}\pi\right),\qquad
k=1,2,\ldots,n-1.\]

We first show that the upper bound in Theorem \ref{thm:main} is asymptotically sharp as
$n\to\infty$ through odd integers.
Suppose that $n$ is odd, and set
$k=(n-1)/2$.
Then
\[
\frac{(2k-1)\pi}{n-1}
=
\pi-\frac{\pi}{n-1},
\]
and hence
\[z_k=-a_n\cos\left(\frac{\pi}{n-1}\right)-1+\mathrm{i}\cdot a_n\sin\left(\frac{\pi}{n-1}\right).\]
It follows that
\begin{align*}
|z_k|^2=(1+a_n)^2-2a_n\left(1-\cos\left(\frac{\pi}{n-1}\right)\right).
\end{align*}
Therefore,
\[
\frac{|z_k|^2}{(1+a_n)^2}
=
1-
\frac{
	2a_n
	\left(
	1-\cos\left(\frac{\pi}{n-1}\right)
	\right)
}{
	(1+a_n)^2
}.
\]
Since
$a_n\rightarrow 1$ and 
$1-\cos\left(\frac{\pi}{n-1}\right)\rightarrow 0$ as $n\to\infty$, 
we have
\[
\lim_{\substack{n\to\infty\\ n\ {\rm odd}}}
\frac{|z_k|^2}{(1+a_n)^2}
=
1,
\]
and consequently 
\[
\lim_{\substack{n\to\infty\\ n\ {\rm odd}}}
\frac{|z_k|}{1+a_n}
=
1.
\]
Thus the upper bound in Theorem~1.3 is asymptotically sharp as
$n\to\infty$ through odd integers.

We next consider the lower bound in Theorem~4.2. Taking $k=1$, we
have
\[z_1=a_n\cos\left(\frac{\pi}{n-1}\right)-1+\mathrm{i}\cdot a_n\sin\left(\frac{\pi}{n-1}\right).\]
Then 
\begin{align*}
|z_1|^2=(a_n-1)^2+2a_n\left(1-\cos\left(\frac{\pi}{n-1}\right)\right),
\end{align*}
and
\[
\frac{|z_1|^2}{(a_n-1)^2}
=
1+
\frac{
	2a_n
	\left(
	1-\cos\left(\frac{\pi}{n-1}\right)
	\right)
}{
	(a_n-1)^2
}.
\]
Since $\frac{\ln(n-1)}{n-1}\rightarrow 0$ as $n\to\infty$,
we have 
\[
a_n-1=e^{\frac{\ln(n-1)}{n-1}}-1
\sim
\frac{\ln(n-1)}{n-1}.
\]
Moreover,
\[
1-\cos\left(\frac{\pi}{n-1}\right)
\sim
\frac{\pi^2}{2(n-1)^2}.
\]
Since $a_n\to1$ as $n\to\infty$, it follows that
\[
\lim_{n\to\infty}\frac{
2a_n\left(1-\cos\left(\frac{\pi}{n-1}\right)\right)
}{
(a_n-1)^2
}
=
\lim_{n\to\infty}\frac{
2\cdot \dfrac{\pi^2}{2(n-1)^2}
}{
\left(\dfrac{\ln(n-1)}{n-1}\right)^2
}
=
\lim_{n\to\infty}\left(\frac{\pi}{\ln(n-1)}\right)^2= 0.
\]
Therefore,
\[
\lim_{n\to\infty}
\frac{|z_1|^2}{(a_n-1)^2}
=
1,
\]
and consequently
\[
\lim_{n\to\infty}
\frac{|z_1|}{a_n-1}
=
1.
\]
Hence the lower bound in Theorem~4.2 is asymptotically sharp.

\section{Concluding remarks}

In this paper, we prove the Brown-Mol conjecture on subtree roots and completely characterize
the equality case. We also establish an asymptotically sharp lower
bound for the moduli of nonzero subtree roots.

Brown and Mol observed numerically in~\cite{Brown} that the subtree roots appear to be distributed
approximately around the point $-1/2$. They proposed the following conjecture, which
strengthens Theorem \ref{thm:main}, and verified it computationally for all
trees of order at most $18$.

\begin{conjecture}\emph{\cite{Brown}}\label{C2}
	If $T$ is a tree of order $n\geq 2$, then the subtree roots of $T$ are contained in the annulus
	\[\left \{ z\in \mathbb{C}: \frac{1}{2}\leq \left|z+\frac{1}{2}\right|\leq \frac{1}{2} + \sqrt[n-1]{n-1} \right\}.\]
\end{conjecture}

\begin{figure}[htbp]
	\centering
	\includegraphics[width=0.8\textwidth]{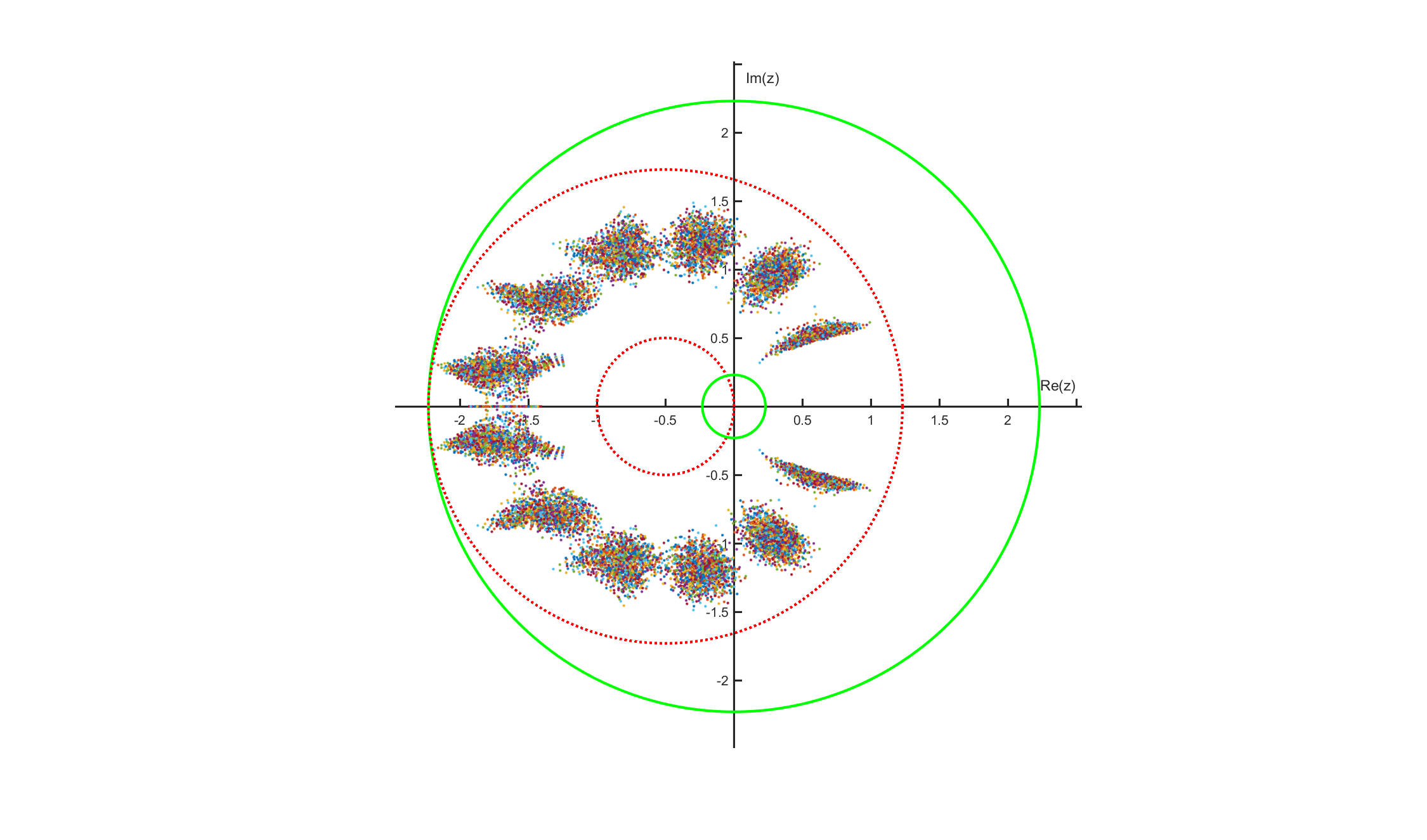}
	\caption{Subtree roots of all nonisomorphic trees of order $13$.
		Let $a_{13}=\sqrt[12]{12}\approx 1.2301$. The green circles
		$|z|=1+a_{13}$ and 
		$|z|=a_{13}-1$
		correspond to the bounds in
		Theorems~\ref{thm:main} and~\ref{T1}, respectively. The red circles $|z+1/2|=1/2$
		and 
		$|z+1/2|=1/2+a_{13}$
		correspond to the two bounds in Conjecture \ref{C2}, respectively.}
	\label{fig:roots-order-13}
\end{figure}

For comparison, Figure~\ref{fig:roots-order-13} shows the subtree roots of
all nonisomorphic trees of order $13$, together with the boundary circles
arising from Theorems~\ref{thm:main} and~\ref{T1} and those appearing in
Conjecture~\ref{C2}. The figure is included to illustrate the 
relationship between the bounds proved in this paper and the stronger
translated annulus conjectured by Brown and Mol. We emphasize that
Brown and Mol \cite{Brown} carried out more extensive computations for all trees of
order at most $18$.

The method developed in this paper does not appear to extend
directly to Conjecture \ref{C2}. 
A different approach may therefore be required in the future.

\section*{Acknowledgements}

Conjecture \ref{C1} has been of sustained interest to the second author
over the past several years. The ideas leading to this work were
developed through discussions during the 13th Workshop on Matrix and
Graph Spectra Theory held in Urumqi, Xinjiang, China, and the work was
completed during subsequent discussions at Xinjiang Normal University.

This work is supported by the National Natural Science Foundation of China (Nos. 12571379, 12571360, 12571366), Scientific Research Start-Up Foundation of Jimei University (No. ZQ2024116), Fujian Provincial Department of Education (No. JAT251074) and Natural Science
Foundation of Xiamen Municipality (No.~3502Z202673038).

\section*{Declarations}
\noindent
{\bf Conflict of interest}
The authors declare that they have no conflict of interest.


\end{document}